\numberwithin{equation}{section}
\newtheorem{theorem}{Theorem}[section]
\newdefinition{remark}[theorem]{Remark}
\newdefinition{corollary}[theorem]{Corollary}
\newdefinition{definition}[theorem]{Definition}
\newdefinition{problem}[theorem]{Problem}
\newdefinition{example}[theorem]{Example}
\journal{Elsevier} 
\begin{document}

\begin{frontmatter}

\title{Fast and accurate evaluation of dual Bernstein polynomials}

\author{Filip Chudy}
\ead{Filip.Chudy@cs.uni.wroc.pl}

\author{Pawe{\l} Wo\'{z}ny\corref{cor}}
\ead{Pawel.Wozny@cs.uni.wroc.pl}
\cortext[cor]{Corresponding author. Fax {+}48 71 3757801}
\address{Institute of Computer Science, University of Wroc{\l}aw,
         ul.~Joliot-Curie 15, 50-383 Wroc{\l}aw, Poland}

\begin{abstract}
Dual Bernstein polynomials find many applications in approximation theory,
computational mathematics, numerical analysis and computer-aided geometric
design. In this context, one of the main problems is fast and accurate 
evaluation both of these polynomials and their linear combinations. New simple
recurrence relations of low order satisfied by dual Bernstein polynomials are
given. In particular, a first-order non-homogeneous recurrence relation linking
dual Bernstein and shifted Jacobi orthogonal polynomials has been obtained. 
When used properly, it allows to propose fast and numerically efficient
algorithms for evaluating all $n+1$ dual Bernstein polynomials of degree $n$ 
with $O(n)$ computational complexity.
\end{abstract}

\begin{keyword}
Recurrence relations; Bernstein basis polynomials; Dual Bernstein polynomials;
Jacobi polynomials.    
\end{keyword}

\end{frontmatter}

\section{Introduction}                                  \label{S:Introduction}

Let us introduce the inner product 
$\left<\cdot,\cdot\right>_{\alpha,\beta}$ by
\begin{equation}\label{E:InnerProd}
\left<f,g\right>_{\alpha,\beta}:=
             \int_{0}^{1}(1-x)^\alpha x^\beta f(x)g(x)\,\mbox{d}x,
\end{equation}
where $\alpha,\beta>-1$.
             
Let $\Pi_n$ $(n\in\mathbb N)$ denote the set of polynomials of degree at most
$n$. Recall that the \textit{shifted Jacobi polynomial of degree $n$},
$R_n^{(\alpha,\beta)}\in\Pi_n$, is defined by
\begin{equation}\label{E:JacobiP}
R_n^{(\alpha,\beta)}(x):=
       \frac{(\alpha+1)_n}{n!}
          \sum_{k=0}^{n}\frac{(-n)_k(n+\alpha+\beta+1)_k}
                             {k!(\alpha+1)_k}(1-x)^k \qquad(n=0,1,\ldots).
\end{equation}
Here $(c)_l$ $(c\in\mathbb C;\ l\in\mathbb N)$ denotes the \textit{Pochhammer
symbol},
$$
(c)_0:=1,\qquad (c)_l:=c(c+1)\ldots(c+l-1)\quad (l\geq 1).
$$             
These polynomials satisfy the second-order recurrence relation of the form
\begin{equation}\label{E:JacobiRecRel}
\xi_0(n)R^{(\alpha,\beta)}_n(x)+
\xi_1(n)R^{(\alpha,\beta)}_{n+1}(x)+\xi_2(n)R^{(\alpha,\beta)}_{n+2}(x)=0
\qquad (n=0,1,\ldots),
\end{equation}
where  
\begin{eqnarray}
&&\label{E:Def_xi_0}
\xi_0(n):=-2(n+\alpha+1)(n+\beta+1)(2n+\sigma+3),\\
&&\label{E:Def_xi_1}
\xi_1(n):=(2n+\sigma+2)
                \{(2n+\sigma+1)(2n+\sigma+3)(2x-1)+\alpha^2-\beta^2\},\\
&&\label{E:Def_xi_2}
\xi_2(n):=-2(n+2)(n+\sigma+1)(2n+\sigma+1),
\end{eqnarray}
and $\sigma:=\alpha+\beta+1$ (cf., e.g., \cite[\S1.8]{KS1998}).

\begin{remark}\label{R:JacobiRecRel}
Notice that the recurrence relation~\eqref{E:JacobiRecRel} can be used, for 
example, in fast and accurate methods for evaluating the values 
$R^{(\alpha,\beta)}_n(x)$ for a given $x,\alpha,\beta$ and all $0\leq n\leq N$,
where $N$ is a~fixed natural number, with $O(N)$ computational complexity. For
more details about performing computations with recurrence relations properly,
see \cite{Wimp1984}. 
\end{remark}

Shifted Jacobi polynomials are orthogonal with respect to the inner
product~\eqref{E:InnerProd}, i.e., 
$$
\left<R^{(\alpha,\beta)}_k,R^{(\alpha,\beta)}_l\right>_{\alpha,\beta}=
\delta_{kl}h_k\qquad (k,l\in\mathbb N),
$$
where $\delta_{kl}$ is the \textit{Kronecker delta} ($\delta_{kl}=0$ for 
$k\neq l$ and $\delta_{kk}=1$) and
$$
h_k:=K\,\frac{(\alpha+1)_k(\beta+1)_k}
             {k!(2k/\sigma+1)(\sigma)_{k}}\qquad (k=0,1,\ldots)
$$
with $K:={\Gamma(\alpha+1)\Gamma(\beta+1)}/{\Gamma(\sigma+1)}$. For more
properties and applications of polynomials $R^{(\alpha,\beta)}_n$, see, e.g.,
\cite{KS1998,AAR1999}.

Let $B^n_0,B^n_1,\ldots, B^n_n\in\Pi_n$ be \textit{Bernstein basis polynomials}
given by
\begin{equation}\label{E:BernPoly}
B^n_i(x):=\binom{n}{i}x^i(1-x)^{n-i}\qquad (i=0,1,\ldots,n;\ n\in\mathbb N).
\end{equation}

\begin{definition}[{\cite[\S5]{LW2006}}]\label{D:DualBer}
\textit{Dual Bernstein polynomials of degree $n$},
\begin{equation}\label{E:DualBerPoly}
D^n_0(x;\alpha,\beta),\, D^n_1(x;\alpha,\beta),\,\ldots,\,
D^n_n(x;\alpha,\beta)\in\Pi_n,
\end{equation}
are defined so that the following conditions hold:
$$
\left<B^n_i,D^n_j(\cdot;\alpha,\beta)\right>_{\alpha,\beta}=\delta_{ij}
\qquad (i,j=0,1,\ldots,n)
$$
(cf.~\eqref{E:InnerProd}). We adopt the convention that 
$D^n_i(x;\alpha,\beta):=0$ for $i<0$ or $i>n$.
\end{definition}

Let us mention that in the case $\alpha=\beta=0$ these polynomials were 
introduced earlier by Ciesielski in~\cite{ZC1987}.

Certainly, $\mbox{lin}\{B^n_k\;:\;0\leq k\leq n\}=\Pi_n$. One can also prove 
that dual Bernstein polynomials~\eqref{E:DualBerPoly} form a basis of the 
$\Pi_n$ space.  

It is well-known that for many years Bernstein basis polynomials have been used 
in computer-aided geometric design, approximation theory, numerical analysis 
and computational mathematics. See, e.g., books \cite{Bustamante2017,Farin2002}
and article \cite{Farouki2012}, as well as papers cited 
therein.

For a given function $f$, let us define a polynomial $p^\ast_n$ of the following
\textit{Bernstein-B\'{e}zier form}:
$$
p^\ast_n(x):=\sum_{k=0}^{n}I_kB^n_k(x),
$$
where
\begin{equation}\label{E:IntI_k} 
I_k:=\left<f,D^n_k(\;\cdot\;;\alpha,\beta)\right>_{\alpha,\beta}=
    \int_{0}^{1}(1-x)^\alpha x^\beta f(x)D^n_k(x;\alpha,\beta)\,\mbox{d}x
                                                       \qquad (0\leq k\leq n).
\end{equation}
Recall that the polynomial $p^\ast_n$ minimizes the value of the least-square
error
$$
||f-p_n||_2^2:=\left<f-p_n,f-p_n\right>_{\alpha,\beta}=
      \int_{0}^{1}(1-x)^\alpha x^\beta (f(x)-p_n(x))^2\mbox{d}x
                                                    \qquad (p_n\in\Pi_n)
$$
(cf.~\cite[Lemma 2.2]{WGL2015}).

This is one of the main reasons that dual Bernstein polynomials have recently
been extensively studied and found many theoretical (see
\cite{ZC1987,BJ1998,LW2006,RN2007,RN2008,LW2011}) and practical applications.
For example, these dual polynomials are very useful in: curve intersection 
using B\'{e}zier clipping (\cite{SN1990,BJ2007,LZLW2009}); degree reduction and
merging of B\'{e}zier curves (\cite{WL2009,GLW2016,GLW2017,WGL2015}); 
polynomial approximation of rational B\'{e}zier curves (\cite{LWK2012});
numerical solving of boundary value problems (\cite{GW2018}) or even fractional
partial differential equations (\cite{JBJ2017,JJBB2017}). Skillful use of these
polynomials often results in less costly algorithms of solving many 
computational problems.

In some of the mentioned tasks, as well as in finding the solution of the
least-square problem in the Bernstein-B\'{e}zier form, it is necessary 
to compute the numerical approximations of the collection of integrals
\eqref{E:IntI_k} for all $k=0,1,\ldots,n$ and a given function $f$. In general,
to do so, one has to use quadrature rules (see, e.g., \cite[\S5]{DB}), but it
requires fast evaluation of all $n+1$ dual Bernstein polynomials of degree $n$ 
in many \textit{nodes}. Thus, the authors consider the following problem.

\begin{problem}[cf.~\cite{ChW2018}]\label{P:Problem1}
Let us fix numbers: $n\in\mathbb N$, $x\in\mathbb [0,1]$ and $\alpha,\beta>-1$.
Compute the values 
$$
D^n_i(x;\alpha,\beta)
$$
for all $i=0,1,\ldots,n$. 
\end{problem}

Using new differential-recurrence properties of polynomials
$D^n_i(x;\alpha,\beta)$ obtained in~\cite{ChW2018}, the authors have recently
constructed the fourth-order recurrence relation of the following form:
\begin{equation}\label{E:DualBer-Rec-Rel-b}
\sum_{j=-2}^{2}v_j(i)D^n_{i+j}(x;\alpha,\beta)=0
\end{equation}
satisfied by dual Bernstein polynomials, where coefficients $v_j$ 
$(-2\leq j\leq 2)$ are quintic polynomials in $i$. 
See~\cite[Corollary 4.2 and Eq.~(4.2)]{ChW2018}. Notice that this result follows
from relations between dual Bernstein and shifted Jacobi polynomials
(cf.~\eqref{E:JacobiP}), as well as so-called Hahn orthogonal polynomials (see,
e.g., \cite[\S1.5]{KS1998}). 

The recurrence relation \eqref{E:DualBer-Rec-Rel-b} has been used to propose an
algorithm which solves Problem~\ref{P:Problem1} with $O(n)$ computational
complexity. Notice that previously known methods have $O(n^2)$ or even $O(n^3)$
computational complexity. Experiments have shown that the new method is much
faster and gives good numerical results for low $n$ $(n\approx 20,30)$. 
See~\cite[\S6]{ChW2018}.

The first goal of this paper is to derive new recurrence relations of lower 
order for dual Bernstein polynomials. More specifically, in Section
\ref{SS:NH-RecRel-1}, a simple first-order non-homogeneous recurrence relation
for dual Bernstein polynomials is given. Next, in Section~\ref{SS:RecRel-2-3}, 
we find homogeneous recurrence relations of the second and third order for
polynomials $D^n_i(x;\alpha,\beta)$.

The second goal is to propose fast and accurate algorithms which solve
Problem~\ref{P:Problem1} and have $O(n)$ computational complexity, as well as
work even for large values of $n$ $(n\approx1000,2000)$ (cf.~\cite{Bezerra2013},
where evaluation of high-degree polynomials in Bernstein-B\'{e}zier form was
examined, or signal processing, as well as numerical integration of
highly–oscillating functions, where high-degree polynomials may appear). 
Such efficient methods---based on the relation obtained 
in~\S\ref{SS:NH-RecRel-1}---are presented in Section~\ref{S:DB-Eval}. Results
of numerical experiments are given in~\S\ref{S:NumExperiments}.

\section{New recurrence relations}                         \label{S:NewRecRel}

Now, let us recall some properties of dual Bernstein polynomials
(\ref{E:DualBerPoly}) and shifted Jacobi polynomials (\ref{E:JacobiP}) which
allow us to derive new recurrence relations for $D^n_i(x;\alpha,\beta)$.

In \cite[Theorem 5.1]{LW2006}, the following relation between dual Bernstein
polynomials of degrees $n$, $n+1$, as well as the shifted Jacobi polynomial of
degree $n+1$ has been proven:
\begin{equation}\label{E:RecRel-I}
D^{n+1}_i(x;\alpha,\beta)=\left(1-\dfrac{i}{n+1}\right)
   \,D^{n}_{i}(x;\alpha,\beta)+\frac{i}{n+1}\,D^{n}_{i-1}(x;\alpha,\beta)+
                         C^{(\alpha,\beta)}_{ni}R^{(\alpha,\beta)}_{n+1}(x),
\end{equation}
where $0\le i\le n+1$, and 
\begin{equation}\label{E:Def_C}
C^{(\alpha,\beta)}_{ni}:=(-1)^{n-i+1}\frac{(2n+\sigma+2)(\sigma+1)_n}
                             {K(\alpha+1)_{n-i+1}(\beta+1)_i}.
\end{equation}
Note that for $\alpha=\beta=0$, this identity was found earlier by Ciesielski
in~\cite{ZC1987}.   

It is also known that dual Bernstein polynomials satisfy a \textit{symmetry
relation} of the type
\begin{equation}\label{E:DualBerSym}
D^n_i(x;\alpha,\beta)=D^n_{n-i}(1-x;\beta,\alpha)\qquad (i=0,1,\ldots,n).
\end{equation}
See \cite[Corollary 5.3]{LW2006}.

The polynomial $D^n_i(x;\alpha,\beta)$ can be expressed as \textit{a short}
linear combination of $\min(i,n-i)+1$ shifted Jacobi polynomials with shifted
parameters: 
\begin{eqnarray*}
D^n_i(x;\alpha,\beta)&=&\frac{(-1)^{n-i}(\sigma+1)_{n}}
                             {K\,(\alpha+1)_{n-i}(\beta+1)_i}
                                  \sum_{k=0}^{i}\frac{(-i)_k}{(-n)_k}\,
				                           R^{(\alpha,\beta+k+1)}_{n-k}(x),\\				         
D^n_{n-i}(x;\alpha,\beta)&=&\frac{(-1)^{i}(\sigma+1)_{n}}
                                 {K\,(\alpha+1)_{i}(\beta+1)_{n-i}}
                                    \sum_{k=0}^{i}(-1)^k\frac{(-i)_k}{(-n)_k}
                                             \,R^{(\alpha+k+1,\beta)}_{n-k}(x),
\end{eqnarray*}
where $i=0,1,\ldots,n$. See~\cite[Corollary 5.4]{LW2006}. In particular, we 
have
\begin{eqnarray}
&&\label{E:D^n_0}
D^n_0(x;\alpha,\beta)=\frac{(-1)^{n}(\sigma+1)_{n}}
                           {K\,(\alpha+1)_n}R^{(\alpha,\beta+1)}_{n}(x),\\
&&\label{E:D^n_n}
D^n_n(x;\alpha,\beta)=\frac{(\sigma+1)_{n}}
                           {K\,(\beta+1)_n}R^{(\alpha+1,\beta)}_{n}(x).
\end{eqnarray}

Using \cite[Eq.~(3.1)]{ChW2018}, so-called \textit{Chu-Vandermonde identity}
(see, e.g., \cite[Corollary 2.3.]{AAR1999}) and symmetry~\eqref{E:DualBerSym},
one can check that
\begin{eqnarray}
&&\label{E:DualBer-1}
D^n_i(1;\alpha,\beta)=(-1)^{n-i}\frac{(\sigma+1)_{n}(n-i+\alpha+2)_{i}}
                                     {K\,n!(\beta+1)_i},\\
&&\label{E:DualBer-0}                                     
D^n_i(0;\alpha,\beta)=(-1)^{i}\frac{(\sigma+1)_{n}(i+\beta+2)_{n-i}}
                                   {K\,n!(\alpha+1)_{n-i}}
                                                       \qquad (0\leq i\leq n).
\end{eqnarray}

From \cite[Eq.~(6.4.20) and (6.4.23)]{AAR1999}, it follows that
\begin{eqnarray}
\label{E:JacobiConnection-1}
(1-x)R_n^{(\alpha+1,\beta)}(x)&=& 
                      -\frac{n+1}
                            {2n+\sigma+1}R_{n+1}^{(\alpha,\beta)}(x)+ 
                       \frac{n+\alpha+1}
                            {2n+\sigma+1}R_n^{(\alpha,\beta)}(x),\\
\label{E:JacobiConnection-2}
xR_n^{(\alpha,\beta+1)}(x)&=&
                     \frac{n+1}
                          {2n+\sigma+1}R_{n+1}^{(\alpha,\beta)}(x)+ 
                     \frac{n+\beta+1}
                          {2n+\sigma+1}R_n^{(\alpha,\beta)}(x).
\end{eqnarray}

\subsection{First-order non-homogeneous recurrence relation}    
                                                        \label{SS:NH-RecRel-1}

Using the results mentioned above and relation~\eqref{E:JacobiRecRel}, one can
justify a simple first-order non-homogeneous recurrence relation for dual
Bernstein polynomials.   

\begin{theorem}\label{T:NH-Rec-Rel-1}
For $i=0,1,\ldots,n$, the following relation holds:
\begin{equation}\label{E:NH-Rec-Rel-1}
(x-1)(i+1)D^n_i(x;\alpha,\beta)+x(n-i)D^n_{i+1}(x;\alpha,\beta)=
     \frac{-C^{(\alpha,\beta)}_{n,i+1}}{2n+\sigma+2}T^{(\alpha,\beta)}_{ni}(x),
\end{equation}
where the notation used is that of~\eqref{E:Def_C}, and
\begin{equation}\label{E:Def_T_I}
T^{(\alpha,\beta)}_{ni}(x):=(n-i)(n+\alpha+1)xR^{(\alpha,\beta+1)}_n(x)+
                            (i+1)(n+\beta+1)(1-x)R^{(\alpha+1,\beta)}_n(x).
\end{equation} 
\end{theorem}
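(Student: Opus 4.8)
The plan is to prove that the polynomial
$$E(x):=(x-1)(i+1)D^n_i(x;\alpha,\beta)+x(n-i)D^n_{i+1}(x;\alpha,\beta)+\frac{C^{(\alpha,\beta)}_{n,i+1}}{2n+\sigma+2}\,T^{(\alpha,\beta)}_{ni}(x)$$
is identically zero. Since $D^n_i,\,D^n_{i+1},\,R^{(\alpha,\beta+1)}_n,\,R^{(\alpha+1,\beta)}_n\in\Pi_n$, we have $E\in\Pi_{n+1}$, so $n+2$ independent linear conditions suffice. First I would check that $E$ is orthogonal to $\Pi_{n-1}$ with respect to $\left<\cdot,\cdot\right>_{\alpha,\beta}$: this yields $n$ conditions and, because $R^{(\alpha,\beta)}_m\perp\Pi_{m-1}$ for every $m$, it confines $E$ to the two-dimensional space $\mathrm{span}\{R^{(\alpha,\beta)}_n,R^{(\alpha,\beta)}_{n+1}\}$ (the orthogonal complement of $\Pi_{n-1}$ inside $\Pi_{n+1}$). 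The remaining two conditions will be $E(0)=E(1)=0$.

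For the orthogonality, fix $q\in\Pi_{n-1}$ and write $q=\sum_{k=0}^{n-1}a_kB^{n-1}_k$. From \eqref{E:BernPoly} one has the elementary identities $xB^{n-1}_k=\tfrac{k+1}{n}B^n_{k+1}$ and $B^{n-1}_k=\tfrac{n-k}{n}B^n_k+\tfrac{k+1}{n}B^n_{k+1}$, hence in the $B^n$-basis the coefficient of $B^n_{i+1}$ in $xq$ is $\tfrac{i+1}{n}a_i$ and the coefficient of $B^n_i$ in $(x-1)q$ is $-\tfrac{n-i}{n}a_i$. As $(x-1)q,\,xq\in\Pi_n$, Definition~\ref{D:DualBer} gives
$$\left<(x-1)(i+1)D^n_i+x(n-i)D^n_{i+1},\,q\right>_{\alpha,\beta}=-(i+1)\tfrac{n-i}{n}a_i+(n-i)\tfrac{i+1}{n}a_i=0.$$
For the $T^{(\alpha,\beta)}_{ni}$-term I would move the monomial factors into the weight: $\left<xR^{(\alpha,\beta+1)}_n,q\right>_{\alpha,\beta}=\left<R^{(\alpha,\beta+1)}_n,q\right>_{\alpha,\beta+1}=0$ and $\left<(1-x)R^{(\alpha+1,\beta)}_n,q\right>_{\alpha,\beta}=\left<R^{(\alpha+1,\beta)}_n,q\right>_{\alpha+1,\beta}=0$, by orthogonality of shifted Jacobi polynomials for the parameter pairs $(\alpha,\beta+1)$ and $(\alpha+1,\beta)$. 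Thus $\left<E,q\right>_{\alpha,\beta}=0$ for all $q\in\Pi_{n-1}$, so $E=\mu R^{(\alpha,\beta)}_n+\lambda R^{(\alpha,\beta)}_{n+1}$ for some scalars $\mu,\lambda$.

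It remains to evaluate $E$ at the endpoints. At $x=1$ the $D^n_i$- and $R^{(\alpha+1,\beta)}_n$-summands vanish, and $E(1)=0$ reduces — on inserting $D^n_{i+1}(1;\alpha,\beta)$ from \eqref{E:DualBer-1}, the value $R^{(\alpha,\beta+1)}_n(1)=(\alpha+1)_n/n!$ read off from \eqref{E:JacobiP}, and \eqref{E:Def_C} — to the Pochhammer identity $(n-i+\alpha+1)_{i+1}=(n+\alpha+1)(\alpha+1)_n/(\alpha+1)_{n-i}$. Symmetrically, at $x=0$ the $D^n_{i+1}$- and $R^{(\alpha,\beta+1)}_n$-summands vanish, and $E(0)=0$ reduces, via \eqref{E:DualBer-0} and $R^{(\alpha+1,\beta)}_n(0)=(-1)^n(\beta+1)_n/n!$, to $(i+\beta+2)_{n-i}=(n+\beta+1)(\beta+1)_n/(\beta+1)_{i+1}$; both are immediate telescopings of products of Pochhammer symbols. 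Since $\mu R^{(\alpha,\beta)}_n+\lambda R^{(\alpha,\beta)}_{n+1}$ cannot vanish at both $0$ and $1$ unless $\mu=\lambda=0$ — the determinant $R^{(\alpha,\beta)}_n(1)R^{(\alpha,\beta)}_{n+1}(0)-R^{(\alpha,\beta)}_{n+1}(1)R^{(\alpha,\beta)}_n(0)$ equals $(-1)^{n+1}(\alpha+1)_n(\beta+1)_n(2n+\sigma+1)/(n!\,(n+1)!)\neq0$ — we conclude $E\equiv0$, i.e.\ \eqref{E:NH-Rec-Rel-1}.

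The conceptual content is in the first step; once $E$ is trapped in $\mathrm{span}\{R^{(\alpha,\beta)}_n,R^{(\alpha,\beta)}_{n+1}\}$ the rest is bookkeeping, and the only real pitfalls are sign/index slips in the Bernstein coefficient extraction and in the two endpoint identities. An alternative route, perhaps nearer to the wording ``using the results mentioned above and relation~\eqref{E:JacobiRecRel}'', would substitute the short Jacobi expansions of $D^n_i$ and $D^n_{i+1}$ from \cite[Corollary 5.4]{LW2006}, apply \eqref{E:JacobiConnection-1}--\eqref{E:JacobiConnection-2} to absorb the factors $x$ and $1-x$, and collapse the outcome with \eqref{E:JacobiRecRel}; there the main obstacle is the telescoping of two sums whose lengths differ by one.
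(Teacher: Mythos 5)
Your argument is correct, and it is genuinely different from the paper's proof. The paper proceeds by induction on $n$: it first verifies the case $i=n$ from \eqref{E:D^n_n}, then applies the degree-elevation identity \eqref{E:RecRel-I} twice to pass from degree $n+1$ back to degree $n$, rewrites the inhomogeneity via the expansion \eqref{E:Def_T_II} of $T^{(\alpha,\beta)}_{ni}$ in $R^{(\alpha,\beta)}_n,R^{(\alpha,\beta)}_{n+1}$, and collapses what remains with the three-term recurrence \eqref{E:JacobiRecRel}. You instead work directly from Definition~\ref{D:DualBer}: the Bernstein coefficient extraction $\bigl\langle(x-1)(i+1)D^n_i+x(n-i)D^n_{i+1},q\bigr\rangle_{\alpha,\beta}=0$ for $q\in\Pi_{n-1}$ (your degree-elevation identities and the resulting coefficients $-\tfrac{n-i}{n}a_i$, $\tfrac{i+1}{n}a_i$ check out, including the boundary case $i=n$ where $D^n_{n+1}=0$), together with the observation that $xR^{(\alpha,\beta+1)}_n$ and $(1-x)R^{(\alpha+1,\beta)}_n$ are orthogonal to $\Pi_{n-1}$ for the weight $(1-x)^\alpha x^\beta$, traps the defect $E$ in $\mathrm{span}\{R^{(\alpha,\beta)}_n,R^{(\alpha,\beta)}_{n+1}\}$; the two endpoint evaluations then kill it, and I verified your Pochhammer identities $(n-i+\alpha+1)_{i+1}=(n+\alpha+1)(\alpha+1)_n/(\alpha+1)_{n-i}$, $(i+\beta+2)_{n-i}=(n+\beta+1)(\beta+1)_n/(\beta+1)_{i+1}$ and the nonvanishing determinant $(-1)^{n+1}(\alpha+1)_n(\beta+1)_n(2n+\sigma+1)/(n!\,(n+1)!)$. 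What your route buys: it is non-inductive, it needs neither \eqref{E:RecRel-I} from \cite{LW2006} nor the Jacobi three-term recurrence, and it explains structurally why the inhomogeneity is built from parameter-shifted Jacobi polynomials of degree $n$ (they span, after multiplication by $x$ resp.\ $1-x$, the relevant orthogonal complement). What the paper's route buys: it produces the identity \eqref{E:Def_T_II} as a byproduct (useful later) and stays entirely within the recurrence machinery it is advertising. Your second suggested alternative (substituting the short Jacobi expansions from \cite[Corollary 5.4]{LW2006}) is indeed closer in spirit to the paper but is not what the paper actually does either.
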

\begin{proof}
In the sequel, we need the following identity
\begin{multline}
\frac{2n+\sigma+1}{n+1}T^{(\alpha,\beta)}_{ni}(x)=
                (n+\alpha+1)(n+\beta+1)R_{n}^{(\alpha,\beta)}(x)\\
\label{E:Def_T_II}
+\Big((n-i)(n+\alpha+1)-(i+1)(n+\beta+1)\Big)R_{n+1}^{(\alpha,\beta)}(x),
\end{multline}
which can be verified using~\eqref{E:JacobiConnection-1} 
and~\eqref{E:JacobiConnection-2}. 

Let us use induction on $n$. First, observe that for any $i=n$, 
the relation~\eqref{E:NH-Rec-Rel-1} immediately follows from~\eqref{E:D^n_n}. 
So, in particular, it also holds for $n=0$.

Now, suppose that~\eqref{E:NH-Rec-Rel-1} is true for some natural number $n$
and all $0\leq i\leq n$. One has to prove that
$$
(x-1)(i+1)D^{n+1}_i(x;\alpha,\beta)+x(n-i+1)D^{n+1}_{i+1}(x;\alpha,\beta)+
\frac{C^{(\alpha,\beta)}_{n+1,i+1}}
                   {2n+\sigma+4}T^{(\alpha,\beta)}_{n+1,i}(x)\equiv0,
$$
where $0\leq i\leq n+1$. We already know that it holds for $i=n+1$. Assume that
$0\leq i\leq n$. Applying twice~\eqref{E:RecRel-I} to the left-hand side,
using~\eqref{E:Def_T_II} and doing simple algebra, one can obtain its equivalent
form
\begin{multline*}
\frac{n-i+1}{n+1}\Big[(x-1)(i+1)D^n_i(x;\alpha, \beta)+
                                     x(n-i)D^n_{i+1}(x;\alpha, \beta)\Big]\\
+\frac{i+1}{n+1}\Big[(x-1)iD^n_{i-1}(x;\alpha, \beta)+
                                       x(n-i+1)D^n_i(x;\alpha, \beta)\Big]\\
+\left((x-1)(i+1)C^{(\alpha,\beta)}_{ni}+
                    x(n-i+1)C^{(\alpha,\beta)}_{n,i+1}
+C^{(\alpha,\beta)}_{n+1,i+1}
          \frac{(n+\alpha+2)(n+\beta+2)}{(n+2)^{-1}(2n+\sigma+3)_2}\right)
                                                 R_{n+1}^{(\alpha,\beta)}(x)\\                                                 
+\frac{C^{(\alpha,\beta)}_{n+1,i+1}(n+2)}{(2n+\sigma+3)_2}
               \Big((n-i+1)(n+\alpha+2)-(i+1)(n+\beta+2)\Big)
                                              R_{n+2}^{(\alpha,\beta)}(x).
\end{multline*}

Applying twice the induction assumption to terms in square brackets and after
some algebra, we have
\begin{equation}\label{E:Proof-G-RecRel}
G^{(\alpha,\beta)}_{ni}
       \left(\xi_0(n)R^{(\alpha,\beta)}_n(x)+
                   \xi_1(n)R^{(\alpha,\beta)}_{n+1}(x)+
                         \xi_2(n)R^{(\alpha,\beta)}_{n+2}(x)\right),
\end{equation}
where the notation used is that of~\eqref{E:Def_xi_0}--\eqref{E:Def_xi_2}, and 
$$
G^{(\alpha,\beta)}_{ni}:=-C^{(\alpha,\beta)}_{ni}
               \frac{(n-i+1)(n-i+\alpha+1)-(i+1)(i+\beta+1)}
                    {2(n+1)(2n+\sigma+2)_2(n-i+\alpha+1)}.
$$

Indeed, it follows from~\eqref{E:JacobiRecRel} that~\eqref{E:Proof-G-RecRel} is
equal to zero. At the end, note the special case: in~\eqref{E:Proof-G-RecRel}, 
if $i=n-i$ and $\alpha=\beta$, both the expression in brackets and
$G_{ni}^{(\alpha, \beta)}$ are equal to zero.
\end{proof}

Let us stress that shifted Jacobi polynomials appearing 
in~\eqref{E:Def_T_I} (cf.~\eqref{E:Def_T_II}) do not depend on $i$. Now, solving
this first-order non-homogeneous recurrence relation and using~\eqref{E:D^n_0}
allows us to obtain---after some algebra---a more explicit formula for dual 
Bernstein polynomials.

\begin{corollary}
For $i=0,1,\ldots,n$, we have
\begin{multline*}
D^n_i(x;\alpha,\beta)=
  \binom{n}{i}^{-1}\frac{(-1)^{n-i}(\sigma+1)_n}
                       {K(\alpha+1)_n(\beta+1)_n}\left(\frac{x-1}{x}\right)^i\\
\times\left[
 R^{(\alpha,\beta+1)}_n(x)S^{(\alpha+1,\beta)}_{ni}\left(\frac{x}{x-1}\right)-
 R^{(\alpha+1,\beta)}_n(x)S^{(\alpha,\beta+1)}_{n,i-1}\left(\frac{x}{x-1}\right)
\right],                                              
\end{multline*}
where
$$
S^{(a,b)}_{mk}(z):=(b+1)_m
\sum_{j=0}^{k}\frac{(-m)_j(-m-a)_j}{j!(b+1)_j}z^j\qquad (0\leq k\leq m)
$$
(cf.~\eqref{E:JacobiP}).
\end{corollary}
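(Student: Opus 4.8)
The plan is to solve the first-order non-homogeneous recurrence relation \eqref{E:NH-Rec-Rel-1} from Theorem~\ref{T:NH-Rec-Rel-1} explicitly, using the initial value \eqref{E:D^n_0} for $D^n_0(x;\alpha,\beta)$. Rewrite \eqref{E:NH-Rec-Rel-1} in the standard form $D^n_{i+1} = a_i D^n_i + b_i$, where $a_i = \frac{(1-x)(i+1)}{x(n-i)}$ and $b_i$ collects the right-hand side divided by $x(n-i)$; here one must keep careful track of the factor $\bigl(\frac{x-1}{x}\bigr)$ that will accumulate as $\bigl(\frac{x-1}{x}\bigr)^i$ after telescoping, together with the product of the linear factors $(i+1)$ and $(n-i)$ that produces the binomial coefficient $\binom{n}{i}^{-1}$.

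First I would iterate the recurrence: the homogeneous solution through $i$ steps is $\prod_{j=0}^{i-1} a_j = \bigl(\frac{1-x}{x}\bigr)^i \frac{i!}{n!/(n-i)!} = \bigl(\frac{1-x}{x}\bigr)^i \binom{n}{i}^{-1}$, and the particular part is a sum $\sum_{k}\bigl(\text{product of remaining }a_j\bigr) b_k$. Substituting the explicit form of $b_k$ from \eqref{E:Def_T_I}/\eqref{E:Def_C} and collecting the two shifted Jacobi polynomials $R^{(\alpha,\beta+1)}_n(x)$ and $R^{(\alpha+1,\beta)}_n(x)$ (which, crucially, do not depend on the summation index, as stressed right after the theorem), one obtains two separate finite sums whose summands are ratios of Pochhammer symbols times powers of $\frac{x}{x-1}$. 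Matching these against the definition of $S^{(a,b)}_{mk}(z)$ — noting that $S^{(a,b)}_{mk}(z) = (b+1)_m\sum_{j=0}^k \frac{(-m)_j(-m-a)_j}{j!(b+1)_j} z^j$ is, up to the prefactor $(b+1)_m$, a truncated ${}_2F_1$-type series closely related to \eqref{E:JacobiP} — is the heart of the computation. The index shift in the second term ($S^{(\alpha,\beta+1)}_{n,i-1}$ rather than $S^{(\alpha,\beta+1)}_{ni}$) arises naturally because that contribution enters the telescoped sum one step later, and the change of sign between the two brackets tracks the sign $(-1)^{n-i}$ versus $(-1)^{n-i+1}$ in \eqref{E:Def_C} for consecutive indices.

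The main obstacle will be the bookkeeping of the Pochhammer-symbol constants: one must verify that the accumulated products of $C^{(\alpha,\beta)}_{n,k+1}$, the factors $2n+\sigma+2$, the binomial/factorial pieces, and the degree-$n$ Jacobi normalizations all collapse to the single clean prefactor $\binom{n}{i}^{-1}\frac{(-1)^{n-i}(\sigma+1)_n}{K(\alpha+1)_n(\beta+1)_n}$. I would handle this by checking the identity at $i=0$ directly — where the sum $S^{(\alpha+1,\beta)}_{n0}(z) = (\beta+1)_n$ and $S^{(\alpha,\beta+1)}_{n,-1}(z) = 0$, so the claimed formula reduces to exactly \eqref{E:D^n_0} — and then confirming that each increment $i \to i+1$ reproduces \eqref{E:NH-Rec-Rel-1}; equivalently, plugging the candidate formula back into the recurrence and using the elementary contiguous relation $S^{(a,b)}_{mk}(z) - S^{(a,b)}_{m,k-1}(z) = (b+1)_m \frac{(-m)_k(-m-a)_k}{k!(b+1)_k} z^k$ for the truncated sums. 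Since the recurrence is first order with a known initial value, this verification is both necessary and sufficient, so the proof is just ``some algebra'' as the statement says — but the algebra is the kind where one genuinely must be careful with every $(-1)$ and every $(c)_l$.
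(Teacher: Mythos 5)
Your proposal is correct and follows essentially the same route the paper indicates: the paper derives this corollary precisely by solving the first-order non-homogeneous recurrence~\eqref{E:NH-Rec-Rel-1} with the initial value~\eqref{E:D^n_0} ``after some algebra,'' which is exactly your iteration/telescoping of $D^n_{i+1}=a_iD^n_i+b_i$ with $\prod_{j=0}^{i-1}a_j=\binom{n}{i}^{-1}\bigl(\tfrac{1-x}{x}\bigr)^i$. Your added verification scheme (check $i=0$ against~\eqref{E:D^n_0}, then confirm the increment via the term-difference identity for $S^{(a,b)}_{mk}$) is a sound way to make the omitted algebra rigorous, since the recurrence is first order with nonvanishing leading coefficient for $x\in(0,1)$.
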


\subsection{Homogeneous recurrence relations of order 2 and 3} 
                                                         \label{SS:RecRel-2-3}

Using Theorem~\ref{T:NH-Rec-Rel-1}, one can obtain a homogeneous relations of
order 2 and 3 by eliminating the non-homogeneity. The proofs are quite technical
therefore we omit them. Let us only mention that in the construction of
third-order recurrence relation for dual Bernstein polynomials we used the same
idea as in the proof of~\cite[Collorary 5.2]{ChW2018}, where recurrence of the
form~\eqref{E:DualBer-Rec-Rel-b} has been derived.

\begin{corollary}\label{C:RecRel-2}
Dual Bernstein polynomials satisfy the second-order recurrence relation of the
form
$$
u_0(i)D^n_i(x;\alpha,\beta)+u_1(i)D^n_{i+1}(x;\alpha,\beta)+
                     u_2(i)D^n_{i+2}(x;\alpha,\beta)=0\qquad (0\leq i\leq n-2),
$$
where
\begin{eqnarray*}
&&u_0(i):=(x-1)(i+1)(n-i+\alpha)T^{(\alpha,\beta)}_{n,i+1}(x),\\
&&u_1(i):=x(n-i)(n-i+\alpha)T^{(\alpha,\beta)}_{n,i+1}(x)+
          (x-1)(i+2)(i+\beta+2)T^{(\alpha,\beta)}_{ni}(x),\\
&&u_2(i):=x(n-i-1)(i+\beta+2)T^{(\alpha,\beta)}_{ni}(x),
\end{eqnarray*}
where the notation used is that of~\eqref{E:Def_T_I}.
\end{corollary}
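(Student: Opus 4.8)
The plan is to eliminate the non-homogeneous term in Theorem~\ref{T:NH-Rec-Rel-1} by forming an $x$-weighted combination of two consecutive instances of~\eqref{E:NH-Rec-Rel-1}. Writing~\eqref{E:NH-Rec-Rel-1} at index $i$ and at index $i+1$ produces two identities whose right-hand sides equal, up to the common factor $-1/(2n+\sigma+2)$, the products $C^{(\alpha,\beta)}_{n,i+1}T^{(\alpha,\beta)}_{ni}(x)$ and $C^{(\alpha,\beta)}_{n,i+2}T^{(\alpha,\beta)}_{n,i+1}(x)$, respectively. Since the dual Bernstein polynomials appear only on the left-hand sides, a linear combination of these two identities with coefficients proportional to $T^{(\alpha,\beta)}_{n,i+1}(x)$ and $T^{(\alpha,\beta)}_{ni}(x)$ will have a right-hand side that factors through $T^{(\alpha,\beta)}_{ni}(x)\,T^{(\alpha,\beta)}_{n,i+1}(x)$, and the constants can be chosen so that it vanishes.

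The only nontrivial point is the choice of those constants, which rests on the observation that the quotient of the quantities~\eqref{E:Def_C} simplifies to a rational function of $i$ free of Pochhammer symbols: directly from the definition one checks
\[
\frac{C^{(\alpha,\beta)}_{n,i+2}}{C^{(\alpha,\beta)}_{n,i+1}}
 = -\,\frac{(\alpha+1)_{n-i}}{(\alpha+1)_{n-i-1}}\cdot
       \frac{(\beta+1)_{i+1}}{(\beta+1)_{i+2}}
 = -\,\frac{n-i+\alpha}{i+\beta+2},
\]
the leading minus sign coming from the factor $(-1)^{n-i+1}$ in~\eqref{E:Def_C} changing by $-1$ under $i\mapsto i+1$. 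Consequently, multiplying the instance of~\eqref{E:NH-Rec-Rel-1} at index $i$ by $(n-i+\alpha)\,T^{(\alpha,\beta)}_{n,i+1}(x)$ and the instance at index $i+1$ by $(i+\beta+2)\,T^{(\alpha,\beta)}_{ni}(x)$, then adding, makes the two right-hand sides $-\dfrac{T^{(\alpha,\beta)}_{ni}(x)T^{(\alpha,\beta)}_{n,i+1}(x)}{2n+\sigma+2}\bigl(C^{(\alpha,\beta)}_{n,i+1}(n-i+\alpha)+C^{(\alpha,\beta)}_{n,i+2}(i+\beta+2)\bigr)$ cancel identically.

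What then remains is purely mechanical: collect on the left-hand side of the resulting identity the coefficients of $D^n_i(x;\alpha,\beta)$, $D^n_{i+1}(x;\alpha,\beta)$ and $D^n_{i+2}(x;\alpha,\beta)$. The first instance contributes $(x-1)(i+1)(n-i+\alpha)T^{(\alpha,\beta)}_{n,i+1}(x)$ to $D^n_i$ and $x(n-i)(n-i+\alpha)T^{(\alpha,\beta)}_{n,i+1}(x)$ to $D^n_{i+1}$; the second instance contributes $(x-1)(i+2)(i+\beta+2)T^{(\alpha,\beta)}_{ni}(x)$ to $D^n_{i+1}$ and $x(n-i-1)(i+\beta+2)T^{(\alpha,\beta)}_{ni}(x)$ to $D^n_{i+2}$. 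These are exactly $u_0(i)$, $u_1(i)$, $u_2(i)$, which gives the claim. The admissible range is governed by the fact that the instance at index $i+1$ of~\eqref{E:NH-Rec-Rel-1} requires $i+1\le n$, while the case $i=n-1$ is excluded because there $u_2(n-1)=x(n-i-1)(i+\beta+2)T^{(\alpha,\beta)}_{n,n-1}(x)$ vanishes (and $D^n_{n+1}\equiv0$), so one keeps only $0\le i\le n-2$ to have a genuine three-term relation. There is no real obstacle here; the points to watch are the sign bookkeeping in the quotient $C^{(\alpha,\beta)}_{n,i+2}/C^{(\alpha,\beta)}_{n,i+1}$ and the remark that $T^{(\alpha,\beta)}_{ni}(x)$ and $T^{(\alpha,\beta)}_{n,i+1}(x)$ are not identically zero, so that the combination is non-trivial.
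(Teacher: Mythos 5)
Your proof is correct: the cancellation constant follows from $C^{(\alpha,\beta)}_{n,i+2}/C^{(\alpha,\beta)}_{n,i+1}=-(n-i+\alpha)/(i+\beta+2)$, and the weighted sum of the two instances of~\eqref{E:NH-Rec-Rel-1} reproduces exactly $u_0,u_1,u_2$. This is precisely the route the paper indicates (eliminating the non-homogeneity of Theorem~\ref{T:NH-Rec-Rel-1}) but leaves unwritten, so no further comment is needed.
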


The coefficients $u_j$ $(j=0,1,2)$ are not simple because they depend on two 
shifted Jacobi polynomials of degree $n$ in $x$. However, these polynomials 
are independent of $i$ and can be efficiently computed with the
recurrence~\eqref{E:JacobiRecRel} (cf.~Remark~\ref{R:JacobiRecRel}) and re-used
for all remaining $i$. Thus, Corollary~\ref{C:RecRel-2} may be useful in
numerical practice.

\begin{corollary}\label{C:RecRel-3}
For $0\leq i\leq n-3$, the polynomials $D^n_i(x;\alpha,\beta)$ satisfy the
following third-order recurence relation:
\begin{equation}\label{E:RecRel-3}
w_0(i)D^n_i(x;\alpha,\beta)+w_1(i)D^n_{i+1}(x;\alpha,\beta)+
        w_2(i)D^n_{i+2}(x;\alpha,\beta)+w_3(i)D^n_{i+3}(x;\alpha,\beta)=0.
\end{equation}
Here
\begin{eqnarray*}
&&w_0(i):=(x-1)(i+1)(n-i+\alpha-1)_2,\\
&&w_1(i):=(n-i+\alpha-1)[x(n-i)(n-i+\alpha)+2(x-1)(i+2)(i+\beta+2)],\\
&&w_2(i):=(i+\beta+2)[(x-1)(i+3)(i+\beta+3)+2x(n-i-1)(n-i+\alpha-1)],\\
&&w_3(i):=x(n-i-2)(i+\beta+2)_2.
\end{eqnarray*}
\end{corollary}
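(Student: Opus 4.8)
The plan is to eliminate the non-homogeneity from Theorem~\ref{T:NH-Rec-Rel-1} once more, but in a way that kills the two independent Jacobi polynomials $R^{(\alpha,\beta+1)}_n$ and $R^{(\alpha+1,\beta)}_n$ simultaneously rather than the combined quantity $T^{(\alpha,\beta)}_{ni}$. Recall from~\eqref{E:NH-Rec-Rel-1} that $(x-1)(i+1)D^n_i + x(n-i)D^n_{i+1}$ equals a scalar multiple of $T^{(\alpha,\beta)}_{ni}(x)$, and from~\eqref{E:Def_T_I} that $T^{(\alpha,\beta)}_{ni}(x) = (n-i)(n+\alpha+1)x\,R^{(\alpha,\beta+1)}_n(x) + (i+1)(n+\beta+1)(1-x)\,R^{(\alpha+1,\beta)}_n(x)$; writing $P:=x\,R^{(\alpha,\beta+1)}_n(x)$ and $Q:=(1-x)\,R^{(\alpha+1,\beta)}_n(x)$ (both independent of $i$), Theorem~\ref{T:NH-Rec-Rel-1} becomes, after absorbing the explicit constant $-C^{(\alpha,\beta)}_{n,i+1}/(2n+\sigma+2)$ via~\eqref{E:Def_C} and simplifying, a relation of the shape
\begin{equation*}
(x-1)(i+1)D^n_i(x;\alpha,\beta) + x(n-i)D^n_{i+1}(x;\alpha,\beta) = \frac{\rho_n}{(\alpha+1)_{n-i}(\beta+1)_{i+1}}\big[(n-i)(n+\alpha+1)P + (i+1)(n+\beta+1)Q\big],
\end{equation*}
where $\rho_n$ collects the factors depending only on $n$. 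The idea is that this exhibits $D^n_i$-combinations as lying in the two-dimensional span of $P$ and $Q$ with $i$-dependent coefficients that are \emph{rational} (indeed polynomial after clearing the Pochhammer denominators) in $i$; three consecutive instances of such a relation must therefore be linearly dependent over the ring of coefficients, and the dependency is exactly~\eqref{E:RecRel-3}.

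Concretely, I would first rewrite the non-homogeneous relation at indices $i$, $i+1$, $i+2$ so that the right-hand sides become $\widetilde T_{ni}:=(n-i)(n+\alpha+1)P_i + (i+1)(n+\beta+1)Q_i$ type expressions with a common denominator cleared — multiply the instance at index $i$ by $(n-i+\alpha)\cdots$ and so on, matching the Pochhammer shifts $(\alpha+1)_{n-i}$ against $(\alpha+1)_{n-i-1}$ etc., precisely as the coefficients $w_0,\dots,w_3$ in the statement already encode via the factors $(n-i+\alpha-1)_2$ and $(i+\beta+2)_2$. After this normalization each of the three equations has left-hand side a polynomial-in-$i$ combination of $D^n_i, D^n_{i+1}$ (resp.\ shifted) and right-hand side a polynomial-in-$i$ combination of the fixed pair $P, Q$. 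Then I form the linear combination $w_0(i)\cdot(\text{eq at }i) + w_1(i)\cdot(\text{eq at }i) \text{-adjusted} + \dots$; equivalently, one picks the coefficients $w_0,w_1,w_2,w_3$ so that (a) the $P$-contributions cancel, (b) the $Q$-contributions cancel, and (c) the surviving left-hand side is exactly the telescoped $\sum_{j=0}^3 w_j(i)D^n_{i+j}$. Conditions (a) and (b) are two linear equations on the four unknowns $w_0,\dots,w_3$ over the coefficient ring, which up to a common normalization determine them; condition (c) is then automatic because the left-hand side of each normalized equation involves only two consecutive $D$'s, so a telescoping identity of the form $(x-1)(i+j+1)$-against-$x(n-i-j)$ makes the cross terms collapse.

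In more detail on the telescoping: the left-hand side of the (normalized) equation at index $i+j$ is of the form $A_j D^n_{i+j} + B_j D^n_{i+j+1}$ with $A_j$ a multiple of $(x-1)(i+j+1)$ and $B_j$ a multiple of $x(n-i-j)$. When we add $\lambda_0(\text{eq}_i) + \lambda_1(\text{eq}_{i+1}) + \lambda_2(\text{eq}_{i+2})$ (the $\lambda$'s being the appropriate polynomial multipliers chosen to cancel $P$ and $Q$), the coefficient of $D^n_{i+1}$ is $\lambda_0 B_0 + \lambda_1 A_1$, the coefficient of $D^n_{i+2}$ is $\lambda_1 B_1 + \lambda_2 A_2$, while $D^n_i$ and $D^n_{i+3}$ appear only once. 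One then checks these sums reproduce $w_1(i)$ and $w_2(i)$ as displayed — this is where the "$+2$" in $w_1$ and $w_2$ comes from, being a sum of two contributions — and that $\lambda_0 A_0 = w_0(i)$, $\lambda_2 B_2 = w_3(i)$. Finally I would verify the cancellation of $P$ and $Q$ directly: substituting the $w_j$'s, the $P$-coefficient is $\sum_j \lambda_j \cdot (n-i-j)(n+\alpha+1)\cdot(\text{shift factor})$ and must vanish identically in $i$, and similarly for $Q$; given the explicit forms of $w_0,\dots,w_3$ this is a finite polynomial identity in $i$ (and in $x,\alpha,\beta$) that one confirms by expansion.

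The main obstacle is the bookkeeping of the Pochhammer shifts: because the constant $C^{(\alpha,\beta)}_{n,i+1}$ in~\eqref{E:Def_C} carries $(\alpha+1)_{n-i}(\beta+1)_{i+1}$ in its denominator, the three consecutive instances of Theorem~\ref{T:NH-Rec-Rel-1} do not have a common "clean" right-hand side until one multiplies through by the missing linear factors, and getting those factors to match the $(n-i+\alpha-1)_2$ and $(i+\beta+2)_2$ in $w_0,\dots,w_3$ — rather than introducing spurious common factors that would make the recurrence non-minimal — requires care. Once the normalization is fixed correctly, the cancellation of $P$ and $Q$ and the telescoping are routine (if lengthy) algebra, which is presumably why the authors state that "the proofs are quite technical therefore we omit them"; I would organize the computation exactly as the proof of \cite[Corollary 5.2]{ChW2018} suggests, treating $P$ and $Q$ as formal independent quantities throughout and only invoking their definitions at the very end to confirm the two cancellation identities.
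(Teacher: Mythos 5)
Your route is exactly the one the paper indicates for its (omitted) proof: take three consecutive instances of the non-homogeneous relation \eqref{E:NH-Rec-Rel-1}, eliminate the two $i$-independent quantities $xR^{(\alpha,\beta+1)}_n(x)$ and $(1-x)R^{(\alpha+1,\beta)}_n(x)$, and clear the Pochhammer ratios; carrying this out, the multipliers come out proportional to $(n-i+\alpha-1)_2$, $2(n-i+\alpha-1)(i+\beta+2)$, $(i+\beta+2)_2$, and the telescoped left-hand side reproduces the stated $w_0,\dots,w_3$, so the proposal is correct. One detail to fix when you do the bookkeeping: $-C^{(\alpha,\beta)}_{n,i+1}/(2n+\sigma+2)$ contains the $i$-dependent sign $(-1)^{n-i+1}$ (so your ``$\rho_n$ depending only on $n$'' is not quite accurate); the ratio of consecutive right-hand-side constants is $-(n-i+\alpha)/(i+\beta+2)$, and it is precisely this minus sign, combined with the second-difference pattern $(1,-2,1)$ of the elimination, that yields the $+2$ (rather than $-2$) in $w_1$ and $w_2$.
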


Notice that, compared to~\eqref{E:DualBer-Rec-Rel-b}, 
the recurrence~\eqref{E:RecRel-3} is simpler: i) it has lower order (third 
instead of fourth); ii) its coefficients $w_j$ ($0\leq j\leq 3$) are cubic
polynomials in $i$ (not quintic; cf.~\cite[Eq.~(4.2)]{ChW2018}).

\begin{remark}
Using three new recurrence relations given in \S\ref{SS:NH-RecRel-1} and
\S\ref{SS:RecRel-2-3} one can solve Problem~\ref{P:Problem1} with $O(n)$
computational complexity (cf.~Remark~\ref{R:JacobiRecRel}).
\end{remark}

\section{Algorithms for evaluating dual Bernstein polynomials}
                                                             \label{S:DB-Eval}

Let us come back to Problem~\ref{P:Problem1} of computing all $n+1$ dual
Bernstein polynomials of degree $n$ for fixed $n\in\mathbb N$, $\alpha,\beta>-1$
and $x\in[0,1]$. Recall that these polynomials are dual to Bernstein basis
polynomials~\eqref{E:BernPoly} in the interval $[0,1]$ (see~\eqref{E:InnerProd}
and Definition~\ref{D:DualBer}). So, in the context of applications of
polynomials $D^n_i(x;\alpha,\beta)$ presented in Section~\ref{S:Introduction},
the issue of their evaluation for $0\leq x\leq 1$ is the most important.  

If $x\in\{0,1\}$ then the value of the dual Bernstein polynomial can be easily
obtained (cf.~\eqref{E:DualBer-1} or~\eqref{E:DualBer-0}). Now, suppose that
$x\in(0,1)$. In this section we propose algorithms for evaluating polynomials 
$D^n_i(x;\alpha,\beta)$ $(0\leq i\leq n)$ using the first-order non-homogeneous
recurrence relation (cf.~Theorem~\ref{T:NH-Rec-Rel-1}).  

To obtain accurate methods, it is necessary to be mindful of numerical
difficulties arising when recurrent computations are performed.
See~\cite{Wimp1984}. This is the reason that, in the sequel, we consider two 
ways of using relation~\eqref{E:NH-Rec-Rel-1}, i.e., with a \textit{forward} 
and a \textit{backward} direction of computations.

For a fixed $0\leq i\leq n$, let us define a \textit{forward computation} of
$D^n_i(x;\alpha,\beta)$ as a computation that, starting from 
$D^n_0(x;\alpha,\beta)$, computes $D^n_i(x;\alpha,\beta)$ using 
Theorem~\ref{T:NH-Rec-Rel-1}. Analogously, a~\textit{backward computation} of 
$D^n_i(x; \alpha, \beta)$ starts with $D^n_n(x;\alpha,\beta)$ and uses 
Theorem~\ref{T:NH-Rec-Rel-1} as well. As mentioned before, some numerical
difficulties may arise when performing these computations --- especially for
sufficiently large $n$ and $i$. One can mitigate this issue by performing,
for certain parameter $J\in\mathbb N$ ($0\leq J\leq n$), a forward computation 
of $D^n_i(x;\alpha,\beta)$ for $i=0,1,\dots,J$ and a backward computation of
$D^n_i(x;\alpha,\beta)$ for $i=J+1,J+2,\dots,n$. Note that,
using~\eqref{E:DualBerSym}, a~backward computation can be expressed as a forward
computation with changed parameters.

We have found that, in order to determine the value of $J$, one can use the
function
\begin{equation}\label{E:optimalJ}
J\equiv J(n,x)=\mbox{round}\left(n\cdot p(x)\right),
\end{equation}
where $p$ is a cubic polynomial in $x$ which satisfies the following
interpolation conditions:
\begin{center}
\begin{tabular}{c|c|c|c|c}
 $x$    & 0.01 & 0.3 & 0.7 & 0.99\\ \hline
 $p(x)$ & 0.1  & 0.4 & 0.6 & 0.9
\end{tabular},
\end{center}
and $\mbox{round}(z)$ denotes the nearest integer to the real number $z$. 
It can be checked that
\begin{eqnarray*}
p(x)&=&1.58084223194525186\ldots\cdot x^3-2.37126334791787779\ldots\cdot x^2\\
     &&+1.62239798468112882\ldots\cdot x+0.08401156564574855\ldots.     
\end{eqnarray*}

Let us stress that such choice of $J$ has been established experimentally and 
is used in all algorithms, as well as numerical tests, presented in this
paper.

\subsection{Algorithms}

For given $n\in\mathbb N$ and $\alpha,\beta>-1$, an implementation of a forward
computation of $D^n_i(x; \alpha, \beta)$ for $i=0,1,\ldots,j$ and a fixed $0\leq
j\leq n$ at one point $x\in(0,1)$ is presented in Algorithm~\ref{A:ForwardOneX}.

\begin{algorithm}[ht!]
\caption{Computation of $j+1$ first dual Bernstein polynomials of degree $n$ at
point $x\in(0,1)$}\label{A:ForwardOneX}
\begin{algorithmic}[1]
\Procedure {DualBer}{$n,\alpha,\beta,x,j,K$}

\State $\alpha1 \gets \alpha+1,\ \beta1 \gets \beta+1$

\State $n1\gets n+\alpha1,\ x1x \gets (x-1)/x$

\State $C \gets (-1)^{n+1}\cdot K/n1\cdot 
                   \prod_{j=0}^{n-1}(1+\beta1/(j+\alpha1))$

\State\label{A:L1} $R1 \gets n1\cdot R^{(\alpha,\beta1)}_n(x)$

\State\label{A:L2} $R2 \gets x1x\cdot (n+\beta1)\cdot R^{(\alpha1,\beta)}_n(x)$

\State $D[0] \gets -C\cdot R1$

\For {$i \gets 1,j$}
    \State $p \gets i-n-1$

    \State $q \gets i/p$

    \State $C \gets C\cdot (p-\alpha1)/(i+\beta)$
    
    \State $D[i] \gets q\cdot x1x\cdot D[i-1] - 
                            C\cdot (R1+q\cdot R2)$
\EndFor

\State \Return $D$

\EndProcedure
\end{algorithmic}
\end{algorithm}

For fixed $n\in\mathbb N$ and $\alpha,\beta>-1$,
Algorithm~\ref{A:DualBernsteinOneX} computes the values of all $n+1$ dual
Bernstein polynomials of degree $n$ at one point $x\in(0,1)$. It computes the
value $J$ (cf.~\eqref{E:optimalJ}) and then performs two forward computations,
utilizing Algorithm~\ref{A:ForwardOneX}. This algorithm returns an array
$D\equiv D[0..n]$, where
$$
D[i]=D^n_i(x;\alpha,\beta)\qquad (0\leq i\leq n).
$$

\begin{remark}\label{R:DualBernsteinOneX}
Shifted Jacobi polynomials $R_n^{(\alpha,\beta+1)}$ and
$R_n^{(\alpha+1,\beta)}$ (cf.~lines \ref{A:L1}, \ref{A:L2} in
Algorithm~\ref{A:ForwardOneX}) can be evaluated using recurrence 
relation~\eqref{E:JacobiRecRel} (cf.~Remark~\ref{R:JacobiRecRel}) or even
explicit formula~\eqref{E:JacobiP}. Thus, the computational complexity of
Algorithm~\ref{A:DualBernsteinOneX} is $O(n)$.
\end{remark}

\begin{algorithm}[ht!]
\caption{Computation of all $n+1$ dual Bernstein polynomials of degree $n$ at
point $x\in(0,1)$}\label{A:DualBernsteinOneX}
\begin{algorithmic}[1]
\Procedure {AllDualBer}{$n,\alpha,\beta,x$}

\State $\alpha1 \gets \alpha+1,\ \beta1 \gets \beta+1$

\State $K \gets \Gamma(\alpha1+\beta1)/\left(\Gamma(\alpha1)\cdot 
                                             \Gamma(\beta1)\right)$

\State $J \gets J(n,x)$

\State $D[0..J] \gets \mbox{\textsc{DualBer}}(n,\alpha,\beta,x,J,K)$

\State $D[J+1..n] \gets \mbox{\textsc{ReverseArray}}
                 (\mbox{\textsc{DualBer}}(n,\beta,\alpha,1-x,n-J-1,K))$

\State \Return $D$

\EndProcedure
\end{algorithmic}
\end{algorithm}

Note that the quantities $q$ and $C$ in Algorithm~\ref{A:ForwardOneX}, as well 
as the quantity $K$ in Algorithm~\ref{A:DualBernsteinOneX}, are independent of
$x$. They can be, therefore, computed once for given $n\in\mathbb N$,
$\alpha,\beta>-1$ and used across multiple instances of Problem~\ref{P:Problem1}
for different values of $x\in(0,1)$. This approach is realized in
Algorithms~\ref{A:ForwardManyX} and~\ref{A:DualBernsteinManyX}. Note that they
require $O(n)$ additional memory to store $C$ and $q$. 

\begin{algorithm}[ht!]
\caption{Computation of $j+1$ first dual Bernstein polynomials of degree $n$ at
point $x\in(0,1)$ --- with preprocessing}\label{A:ForwardManyX}
\begin{algorithmic}[1]
\Procedure {DualBer-2}{$n,\alpha,\beta,x,j,q,C$}

\State $\alpha1 \gets \alpha+1,\ \beta1 \gets \beta+1$

\State $n1\gets n+\alpha1,\ x1x \gets (x-1)/x$

\State $R1 \gets n1\cdot R^{(\alpha,\beta1)}_n(x)$

\State $R2 \gets x1x\cdot (n+\beta1)\cdot R^{(\alpha1,\beta)}_n(x)$

\State $D[0] \gets C[0]\cdot R1/n1$

\For {$i \gets 1,j$}

    \State $D[i] \gets q[i-1]\cdot x1x\cdot D[i-1] - 
                         C[i]\cdot(R1+q[i-1]\cdot R2)$
\EndFor

\State \Return $D$

\EndProcedure
\end{algorithmic}
\end{algorithm}

\begin{algorithm}[ht!]
\caption{Computation of all $n+1$ dual Bernstein polynomials of degree $n$ at
multiple points $x_0,x_1,\ldots,x_M\in(0,1)$}\label{A:DualBernsteinManyX}
\begin{algorithmic}[1]
\Procedure {AllDualBer-2}{$n,\alpha,\beta,[x_0,x_1,\ldots,x_M]$}

\State $\alpha1 \gets \alpha+1,\ \beta1 \gets \beta+1$

\State $K \gets \Gamma(\alpha1+\beta1)/\left(\Gamma(\alpha1)\cdot 
                                             \Gamma(\beta1)\right)$

\State $q[0] \gets -1/n$                                             

\State $C[0] \gets (-1)^n\cdot K\cdot 
                 \prod_{j=0}^{n-1}(1+\beta1/(j+\alpha1))$
                 
\State $C[1] \gets C[0]/\beta1$

\For {$i \gets 1,n-1$}
  
    \State $p \gets i-n$

    \State $q[i] \gets (i+1)/p$

    \State $C[i+1] \gets C[i]\cdot (p-\alpha1)/(i+\beta1)$    
\EndFor

\For {$m \gets 0,M$}
        
    \State $J \gets J(n,x_m)$
    
    \State $D[m,0..J] \gets \mbox{\textsc{DualBer-2}}
                                            (n,\alpha,\beta,x_m,J,q,C)$
    
    \State $D[m,J+1..n] \gets \mbox{\textsc{ReverseArray}}
             (\mbox{\textsc{DualBer-2}}(n,\beta,\alpha,1-x_m,n-J-1,q,C))$
    
\EndFor

\State \Return $D$

\EndProcedure
\end{algorithmic}
\end{algorithm}

After executing Algorithm~\ref{A:DualBernsteinManyX}, we obtain 
a two-dimensional array $D\equiv D[0..M,0..n]$, where
$$
D[m,i]=D^n_i(x_m;\alpha,\beta)\qquad (0\leq m\leq M;\ 0\leq i\leq n).
$$
The computational complexity of this algorithm is $O(nM)$ 
(cf.~Remark~\ref{R:DualBernsteinOneX}).

\section{Numerical experiments}                       \label{S:NumExperiments}

The algorithms presented in the previous section have been tested for numerical
stability. The computations have been performed in the computer algebra 
system \textsf{Maple{\small\texttrademark}14}---using single
(\texttt{Digits:=8}), double (\texttt{Digits:=18}) and quadruple
(\texttt{Digits:=32}) precision---on a computer with \texttt{Intel(R) Core(TM)
i5-2540M CPU @ 2.60GHz} processor and \texttt{4 GB} of \texttt{RAM}.

We measure the \textit{accuracy} of approximation $\widetilde{v}$ of a nonzero
number $v$ by computing the quantity
\begin{equation}\label{E:Acc}
\mbox{\texttt{acc}}(\widetilde{v},v):=
               -\log_{10}\left|1-\frac{\widetilde{v}}{v}\right|.
\end{equation}
Hence, $\mbox{\texttt{acc}}(\widetilde{v},v)$ is the number of exact significant 
decimal digits (\texttt{acc} in short) in the approximation $\widetilde{v}$ of
the number $v$.

\begin{table}[!ht]
\renewcommand{\arraystretch}{1.25}
\begin{center}
\begin{tabular}{cc|c|c|c}
         & & \texttt{Digits:=8} & \texttt{Digits:=18} & \texttt{Digits:=32} \\
          \hline
           & $\alpha=\beta=0$          & 7.64 & 17.67 & 31.80 \\
  $n=10$   & $\alpha=\beta=-0.5$       & 6.97 & 17.03 & 31.04 \\
           & $\alpha=-0.33, \beta=5.6$ & 7.14 & 17.39 & 31.27 \\
           \hline
           & $\alpha=\beta=0$          & 7.24 & 17.15 & 31.14 \\
  $n=20$   & $\alpha=\beta=-0.5$       & 6.72 & 16.82 & 30.95 \\
           & $\alpha=-0.33, \beta=5.6$ & 6.65 & 17.47 & 31.17 \\
           \hline
           & $\alpha=\beta=0$          & 6.77 & 17.58 & 30.46 \\
  $n=50$   & $\alpha=\beta=-0.5$       & 6.58 & 17.47 & 30.55 \\
           & $\alpha=-0.33, \beta=5.6$ & 7.00 & 17.43 & 30.94 \\
           \hline
           & $\alpha=\beta=0$          & 6.98 & 16.80 & 30.32 \\
  $n=100$  & $\alpha=\beta=-0.5$       & 6.46 & 17.06 & 31.00 \\
           & $\alpha=-0.33, \beta=5.6$ & 6.79 & 17.30 & 31.16 \\
           \hline
           & $\alpha=\beta=0$          & 7.28 & 16.56 & 31.18 \\
  $n=200$  & $\alpha=\beta=-0.5$       & 6.41 & 16.12 & 30.65 \\
           & $\alpha=-0.33, \beta=5.6$ & 6.21 & 17.02 & 31.00 \\
           \hline
           & $\alpha=\beta=0$          & 6.65 & 17.01 & 30.95 \\
  $n=500$  & $\alpha=\beta=-0.5$       & 6.08 & 16.36 & 30.70 \\
           & $\alpha=-0.33, \beta=5.6$ & 6.13 & 16.80 & 30.91 \\
           \hline
           & $\alpha=\beta=0$          & 6.51 & 16.31 & 30.23 \\
  $n=1000$ & $\alpha=\beta=-0.5$       & 6.23 & 16.56 & 29.99 \\
           & $\alpha=-0.33, \beta=5.6$ & 5.85 & 15.73 & 29.73 \\
           \hline
           & $\alpha=\beta=0$          & 6.09 & 16.88 & 29.64 \\
  $n=2000$ & $\alpha=\beta=-0.5$       & 6.87 & 16.43 & 29.56 \\
           & $\alpha=-0.33, \beta=5.6$ & 6.21 & 15.81 & 30.43 \\
           \hline
           & $\alpha=\beta=0$          & 5.57 & 15.36 & 30.12 \\
  $n=5000$ & $\alpha=\beta=-0.5$       & 5.62 & 15.45 & 29.57 \\
           & $\alpha=-0.33, \beta=5.6$ & 5.29 & 15.42 & 30.51 \\
           \hline
\end{tabular}
\caption{Mean number of \texttt{acc} (cf.~\eqref{E:Acc}) obtained by using
Algorithm~\ref{A:DualBernsteinManyX} for
$x\in\{0.01,0.02,\dots,0.99\}$.}\label{T:Mean}
\end{center}
\renewcommand{\arraystretch}{1}
\end{table}

For fixed $n\in\mathbb N$ and $\alpha,\beta>-1$, the experiments involved
computing values of all $n+1$ dual Bernstein polynomials of degree $n$ for
$x\in\{0.01,0.02,\dots,0.99\}$ using Algorithm~\ref{A:DualBernsteinManyX}, where
\textsf{Maple{\small\texttrademark}14} \texttt{GAMMA} and \texttt{JacobiP}
procedures have been used to compute values of $\Gamma$ function and shifted
Jacobi polynomials, respectively. For each of $(n+1)\cdot 99$ obtained values, 
we have computed the number of exact significant decimal digits
(cf.~\eqref{E:Acc}), where results computed by the same algorithm but in 
a 512-digit arithmetic (\texttt{Digits:=512}) have been assumed to be accurate
while comparing to these done for \texttt{Digits:=8,18,32}.

\begin{table}[!ht]
\renewcommand{\arraystretch}{1.25}
\begin{center}
\begin{tabular}{cc|c|c|c}
 & & \texttt{Digits:=8} & \texttt{Digits:=18} & \texttt{Digits:=32} \\\hline
           & $\alpha=\beta=0$          & 6.34 & 16.36 & 30.47 \\
  $n=10$   & $\alpha=\beta=-0.5$       & 6.01 & 16.18 & 30.41 \\
           & $\alpha=-0.33, \beta=5.6$ & 6.31 & 16.26 & 30.35 \\
           \hline
           & $\alpha=\beta=0$          & 6.12 & 16.23 & 30.25 \\
  $n=20$   & $\alpha=\beta=-0.5$       & 5.99 & 16.16 & 30.32 \\
           & $\alpha=-0.33, \beta=5.6$ & 6.13 & 16.30 & 30.18 \\
           \hline
           & $\alpha=\beta=0$          & 6.31 & 16.44 & 30.16 \\
  $n=50$   & $\alpha=\beta=-0.5$       & 6.22 & 16.28 & 30.26 \\
           & $\alpha=-0.33, \beta=5.6$ & 6.31 & 16.31 & 30.34 \\
           \hline
           & $\alpha=\beta=0$          & 6.32 & 16.38 & 30.15 \\
  $n=100$  & $\alpha=\beta=-0.5$       & 6.17 & 16.36 & 30.36 \\
           & $\alpha=-0.33, \beta=5.6$ & 6.28 & 16.27 & 30.23 \\
           \hline
           & $\alpha=\beta=0$          & 6.11 & 16.13 & 30.21 \\
  $n=200$  & $\alpha=\beta=-0.5$       & 6.05 & 15.89 & 30.11 \\
           & $\alpha=-0.33, \beta=5.6$ & 5.96 & 16.17 & 30.15 \\
           \hline
           & $\alpha=\beta=0$          & 6.17 & 16.18 & 30.15 \\
  $n=500$  & $\alpha=\beta=-0.5$       & 5.90 & 16.02 & 30.17 \\
           & $\alpha=-0.33, \beta=5.6$ & 5.94 & 16.03 & 30.16 \\
           \hline
           & $\alpha=\beta=0$          & 6.05 & 16.02 & 29.93 \\
  $n=1000$ & $\alpha=\beta=-0.5$       & 5.87 & 16.11 & 29.82 \\
           & $\alpha=-0.33, \beta=5.6$ & 5.74 & 15.61 & 29.63 \\
           \hline
           & $\alpha=\beta=0$          & 5.84 & 16.05 & 29.54 \\
  $n=2000$ & $\alpha=\beta=-0.5$       & 6.02 & 16.01 & 29.42 \\
           & $\alpha=-0.33, \beta=5.6$ & 5.86 & 15.61 & 29.96 \\
           \hline
           & $\alpha=\beta=0$          & 5.46 & 15.30 & 29.65 \\
  $n=5000$ & $\alpha=\beta=-0.5$       & 5.49 & 15.36 & 29.46 \\
           & $\alpha=-0.33, \beta=5.6$ & 5.24 & 15.35 & 29.82 \\
           \hline
 \end{tabular}
\caption{First percentile number of \texttt{acc} (cf.~\eqref{E:Acc}) obtained by
using Algorithm~\ref{A:DualBernsteinManyX} for
$x\in\{0.01,0.02,\dots,0.99\}$.}\label{T:1Percent}
\end{center}
\renewcommand{\arraystretch}{1}
\end{table}

The experiments have been performed for dual Bernstein polynomials of degrees
$n\in\{10,20,50,100,200,500,1000,2000,5000\}$ and three $\alpha,\beta$ 
choices --- \textit{Legendre's} ($\alpha=\beta=0$), \textit{Chebyshev's}
($\alpha=\beta=-0.5$), and a \textit{non-standard} choice ($\alpha=-0.33$,
$\beta=5.6$). A mean (Table~\ref{T:Mean}), first percentile 
(Table~\ref{T:1Percent}) and minimal (Table~\ref{T:Min}) number of exact
significant decimal digits have been computed.

The numerical results show that the proposed method for evaluating dual 
Bernstein polynomials works very well even for large degrees. Note that results
given in Tables~\ref{T:Mean} and~\ref{T:1Percent} are almost the same. It
indicates that at least $99\%$ of obtained values have greater or similar number
of exact significant decimal digits than these given in Table~\ref{T:Mean}, thus
making the presented algorithms useful (for example, in numerical evaluation of
integrals~\eqref{E:IntI_k}, even for large $n$). Even though in
\textit{pessimistic cases} the algorithms lose a significant amount of
precision (especially for \texttt{Digits:=8}; see Table~\ref{T:Min}), they 
happen rarely (compare with Table~\ref{T:1Percent}) and do not significantly
affect the average number of correct decimal digits (cf.~Table~\ref{T:Mean}).

\begin{table}[!ht]
\renewcommand{\arraystretch}{1.25}
\begin{center}
\begin{tabular}{cc|c|c|c}
 & & \texttt{Digits:=8} & \texttt{Digits:=18} & \texttt{Digits:=32} \\\hline
           & $\alpha=\beta=0$          & 4.09 & 14.69 & 28.96 \\
  $n=10$   & $\alpha=\beta=-0.5$       & 4.97 & 15.36 & 29.06 \\
           & $\alpha=-0.33, \beta=5.6$ & 4.93 & 14.96 & 29.09 \\
           \hline
           & $\alpha=\beta=0$          & 5.33 & 15.35 & 29.41 \\
  $n=20$   & $\alpha=\beta=-0.5$       & 4.86 & 14.87 & 29.56 \\
           & $\alpha=-0.33, \beta=5.6$ & 5.39 & 15.12 & 28.91 \\
           \hline
           & $\alpha=\beta=0$          & 4.83 & 14.65 & 28.75 \\
  $n=50$   & $\alpha=\beta=-0.5$       & 4.47 & 14.32 & 28.48 \\
           & $\alpha=-0.33, \beta=5.6$ & 4.65 & 14.13 & 29.16 \\
           \hline
           & $\alpha=\beta=0$          & 4.47 & 14.84 & 28.73 \\
  $n=100$  & $\alpha=\beta=-0.5$       & 4.36 & 14.37 & 28.48 \\
           & $\alpha=-0.33, \beta=5.6$ & 2.98 & 12.84 & 27.35 \\
           \hline
           & $\alpha=\beta=0$          & 3.41 & 13.54 & 27.19 \\
  $n=200$  & $\alpha=\beta=-0.5$       & 3.62 & 13.42 & 27.73 \\
           & $\alpha=-0.33, \beta=5.6$ & 4.17 & 14.31 & 28.04 \\
           \hline
           & $\alpha=\beta=0$          & 3.15 & 13.65 & 27.06 \\
  $n=500$  & $\alpha=\beta=-0.5$       & 2.01 & 12.28 & 26.47 \\
           & $\alpha=-0.33, \beta=5.6$ & 3.37 & 13.37 & 27.26 \\
           \hline
           & $\alpha=\beta=0$          & 2.92 & 12.97 & 27.30 \\
  $n=1000$ & $\alpha=\beta=-0.5$       & 3.21 & 13.41 & 27.56 \\
           & $\alpha=-0.33, \beta=5.6$ & 3.18 & 12.99 & 27.44 \\
           \hline
           & $\alpha=\beta=0$          & 2.16 & 12.24 & 26.03 \\
  $n=2000$ & $\alpha=\beta=-0.5$       & 1.46 & 11.85 & 25.40 \\
           & $\alpha=-0.33, \beta=5.6$ & 2.11 & 11.93 & 25.97 \\
           \hline
           & $\alpha=\beta=0$          & 0 & 5.38 & 18.85 \\
  $n=5000$ & $\alpha=\beta=-0.5$       & 0 & 4.25 & 18.41 \\
           & $\alpha=-0.33, \beta=5.6$ & 0 & 5.64 & 19.33 \\
           \hline
 \end{tabular}
\caption{Minimal number of \texttt{acc} (cf.~\eqref{E:Acc}) obtained by using
Algorithm~\ref{A:DualBernsteinManyX} for
$x\in\{0.01,0.02,\dots,0.99\}$.}\label{T:Min}
\end{center}
\renewcommand{\arraystretch}{1}
\end{table}

\bibliographystyle{elsarticle-num} 
\biboptions{sort&compress}
\bibliography{dualB-eval}


\end{document}